\newtheorem{thm}{Theorem}
\newtheorem{lem}{Lemma}
\newtheorem{cor}{Corollary}
\theoremstyle{remark}
\theoremstyle{definition}
\newtheorem*{ack}{Acknowledgement}
\begin{document}

\title[Differences of composition operators]{
Differences of composition operators from analytic Besov spaces into little Bloch type spaces
}

\author[A.K.~Sharma]{Ajay K.~Sharma}

\address{Ajay K. Sharma,
Department of Mathematics, 
Central University of Jammu, 
Bagla, 
Rahya-Suchani, Samba 181143, 
INDIA}

\email{aksju\_76@yahoo.com}

\author[S.~Ueki]{Sei-ichiro Ueki}

\address{Sei-ichiro Ueki, 
Department of Mathematics, 
Faculty of Science, 
Tokai University, 
Hiratsuka 259-1292, 
JAPAN}

\email{sei-ueki@tokai.ac.jp}

\begin{abstract}
The purpose of this paper is to describe the characterization for 
the compact difference of two composition operators 
acting between analytic Besov spaces and the weighted little Bloch type space over the unit disk. 
\end{abstract}

\keywords{Besov spaces, little Bloch spaces, composition operators.}

\subjclass[2010]{Primary 30H25; Secondary 30H30}

\maketitle

\section{Introduction}\label{sec:intr}
Let $\mathbb{D}$ denote the open unit disk in the complex plane $\mathbb{C}$ and 
$dA$ the normalized area measure on $\mathbb{D}$. 
Let $H(\mathbb{D})$ be the set of all analytic functions on $\mathbb{D}$. 
When $1 < p < \infty$, a function $f \in H(\mathbb{D})$ is said to be in the 
analytic Besov space $B^{p}$ if and only if 
\[
\int_{\mathbb{D}}|f^{\prime}(z)|^{p}(1-|z|^{2})^{p-2}dA(z) < \infty.
\]
And a norm $\|\cdot\|_{p}$ on $B^{p}$ is defined by 
\[
\|f\|_{p} = |f(0)| + 
\left[
\int_{\mathbb{D}}|f^{\prime}(z)|^{p}(1-|z|^{2})^{p-2}dA(z)
\right]^{1/p}.
\]
For the case $p =1$, the above integrable condition is satisfied by only constant functions. 
Thus the definition of  the space $B^{1}$ is complicated and 
there are several ways to define $B^{1}$. 
If $1<p<\infty$, it is well known that $f \in B^{p}$ is equivalent to 
\[
\int_{0}^{1}M_{p}^{p}(r,f^{\prime \prime})(1-r)^{p-1}dr < \infty.
\]
In the case $p=1$, the above condition becomes 
$\int_{0}^{1}M_{1}(r,f^{\prime \prime})dr < \infty$. 
Hence we can define the space $B^{1}$ by the condition
\[
\int_{\mathbb{D}}|f^{\prime \prime}(z)| dA(z) 
 < \infty. 
\]
For $w \in \mathbb{D}$, let $\alpha_{w}(z)$ be the conformal automorphism of $\mathbb{D}$ 
defined by 
$\alpha_{w}(z) = (w-z)/(1-\overline{w}z) ~ (z \in \mathbb{D})$. 
Each function $f \in B^{1}$ has an atomic decomposition, that is 
there exist sequences $\{c_{j}\} \in l^{1}$ and $\{w_{j}\} \subset \mathbb{D}$ such that 
\[
f(z) = c_{0} + \sum_{j =1}^{\infty}c_{j} \alpha_{w_{j}}(z) \qquad (z \in \mathbb{D}).
\]
By using this representation, a norm $\|\cdot\|_{1}$ on $B^{1}$ is defined by 
\[
\|f\|_{1} = \inf \sum_{j=0}^{\infty}|c_{j}|,
\]
where the infimum is taken over all $\{c_{j}\} \in l^{1}$ satisfy the above atomic decomposition 
for $f \in B^{1}$. 
It is known that $\|f\|_{1}$ is comparable to 
\[
|f(0)| + |f^{\prime}(0)| + \int_{\mathbb{D}}|f^{\prime \prime}(z)| dA(z) .
\]
For more details about analytic Besov spaces, 
we can refer to monographs \cite{WulZhu, KZhu}. 
Next we will introduce the weighted Bloch type space. 
Throughout this paper, let $\nu$ be a positive continuous radial function on $\mathbb{D}$. 
Here ``radial'' means that $\nu(z) = \nu(|z|)$ for $z \in \mathbb{D}$. 
The weighted Bloch type space $\mathcal{B}_{\nu}$ is the space of all $f \in H(\mathbb{D})$ 
which satisfy 
$\sup_{z \in \mathbb{D}}\nu(z)|f^{\prime}(z)| < \infty$, 
and the little Bloch type space $\mathcal{B}_{\nu, 0}$ consists of all $f \in \mathcal{B}_{\nu}$ 
satisfying $\nu(z)|f^{\prime}(z)| \to 0$ as $|z| \to 1^{-}$. 
It is easy to see that the space $\mathcal{B}_{\nu ,0}$ is a closed subspace in $\mathcal{B}_{\nu}$. 
These Bloch type spaces have been appeared in studies on composition or integral operators. 
For instance, S.~Stevi\'{c} and his collaborators have many studies about these operators 
acting on Bloch type spaces; see \cite{Ste09g,Ste09n,Ste11} and the related references therein. 

One of the major subjects in the field of analytic function spaces and operator theory 
is studies on composition operators. 
For an analytic self-map $\varphi$ of $\mathbb{D}$, 
the composition operator $C_{\varphi}$ is defined by $C_{\varphi}f = f \circ \varphi ~ (f \in H(\mathbb{D}))$. 
This composition operator has been studied extensively on various analytic function spaces. 
The aim of these studies is to explore the relation between 
operator-theoretic behaviors of $C_{\varphi}$ and 
function-theoretic properties of the map $\varphi$. 
Over the past few decades, a considerable number of studies have been conducted on 
the difference of composition operators on analytic function spaces. 
Shapiro and Sundberg \cite{ShSu} and MacCluer et al.~\cite{MOZ} studied 
a compact difference of composition operators on the Hardy spaces 
and topological structures of the space of composition operators. 
Moorhouse \cite{Moor} and Saukko \cite{Sauk11,Sauk12} have investigated 
characterizations for the compactness of the same operator on the weighted Bergman spaces, 
Hosokawa and Ohno \cite{HosOh} have considered it acting on the Bloch spaces. 
They used the pseudo-hyperbolic metric to give equivalent conditions for the compactness 
of the difference operator of compositions. 

Recently, motivated by these results, we have investigated this type operator from 
the analytic Besov space $B^{p}$ into the Bloch type space $\mathcal{B}_{\nu}$ in \cite{ShUe}. 
In that paper, we dealt with the case $C_{\varphi} - C_{\psi} \,:\, B^{p} \to \mathcal{B}_{\nu}$ only. 
Hence the purpose of this paper is to describe equivalent conditions for the compactness of 
$C_{\varphi} - C_{\psi} \,:\, B^{p} \to \mathcal{B}_{\nu,0}$. 
When we consider the case that the range of $C_{\varphi} - C_{\psi}$ is different from the domain of it, 
we have to take notice of the boundedness of it 
because a pair $\{\varphi , \psi\}$ does not always induce the bounded difference operator of compositions. 
In Section \ref{sec:bdd}, we will give characterizations for the boundedness of $C_{\varphi} - C_{\psi}$ which 
the range is $\mathcal{B}_{\nu, 0}$. 
By applying this result for the boundedness, 
we will describe characterizations for the compactness of $C_{\varphi} - C_{\psi}$. 
Section \ref{sec:comp} is devoted to explain the details of them. 

Throughout this paper, the notation $A \lesssim B$ means that 
there exists a positive constant $C$ such that $A \le CB$. 
Of course, the constant $C$ is independent of a function $f$, 
a point $z \in \mathbb{D}$ and related parameters $\{t, r\}$. 
Moreover, if both $A \lesssim B$ and $B \lesssim A$ hold, 
then one says that $A \approx B$.
 
\section{Preliminaries}\label{sec:prelimi}

We will need the following results in Section \ref{sec:bdd} and \ref{sec:comp}.

\begin{lem}\label{lem:growth}
Let $1 \le p < \infty$ and $f \in B^{p}$. 
Then 
\[
|f^{\prime}(z)| \lesssim \frac{\|f\|_{p}}{1-|z|^{2}}
\]
for all $z \in \mathbb{D}$.
\end{lem}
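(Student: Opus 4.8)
The plan is to treat the two ranges of $p$ separately, since the norm $\|\cdot\|_{p}$ is defined in genuinely different ways for $p=1$ and for $1<p<\infty$.

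For $1<p<\infty$ I would exploit the subharmonicity of $|f'|^{p}$ (the function $f'$ is analytic and $t\mapsto t^{p}$ is convex increasing on $[0,\infty)$). Fix $z\in\mathbb{D}$ and set $r=(1-|z|)/2$, so that the Euclidean disk $D(z,r)$ lies in $\mathbb{D}$ and, for every $w\in D(z,r)$, one has $1-|w|\approx 1-|z|$, hence $1-|w|^{2}\approx 1-|z|^{2}$. The area sub-mean value inequality then gives
\[
|f'(z)|^{p}\le \frac{1}{r^{2}}\int_{D(z,r)}|f'(w)|^{p}\,dA(w).
\]
Inserting the factor $(1-|w|^{2})^{p-2}(1-|w|^{2})^{2-p}$ and replacing $(1-|w|^{2})^{2-p}$ by $(1-|z|^{2})^{2-p}$ up to a multiplicative constant, the remaining integral is dominated by $\|f\|_{p}^{p}$. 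Since $r^{-2}\approx (1-|z|^{2})^{-2}$, the powers of $1-|z|^{2}$ combine to $(2-p)-2=-p$, and taking $p$-th roots yields $|f'(z)|\lesssim \|f\|_{p}/(1-|z|^{2})$.

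For $p=1$ I would instead use the atomic decomposition $f(z)=c_{0}+\sum_{j\ge 1}c_{j}\alpha_{w_{j}}(z)$. Differentiating, the constant term disappears and $f'(z)=\sum_{j\ge 1}c_{j}\alpha_{w_{j}}'(z)$. A direct computation gives $\alpha_{w}'(z)=(|w|^{2}-1)/(1-\overline{w}z)^{2}$, so $|\alpha_{w}'(z)|=(1-|w|^{2})/|1-\overline{w}z|^{2}$. The key point is the Möbius identity
\[
1-|\alpha_{w}(z)|^{2}=\frac{(1-|w|^{2})(1-|z|^{2})}{|1-\overline{w}z|^{2}}\le 1,
\]
which shows $|\alpha_{w}'(z)|\le 1/(1-|z|^{2})$ uniformly in $w\in\mathbb{D}$. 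Hence $|f'(z)|\le (1-|z|^{2})^{-1}\sum_{j\ge 1}|c_{j}|\le (1-|z|^{2})^{-1}\sum_{j\ge 0}|c_{j}|$, and passing to the infimum over all admissible decompositions replaces $\sum_{j}|c_{j}|$ by $\|f\|_{1}$ (equivalently, by the comparable quantity $|f(0)|+|f'(0)|+\int_{\mathbb{D}}|f''|\,dA$).

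I expect the only delicate point to be the exponent bookkeeping in the case $1<p<\infty$, namely verifying that $(1-|w|^{2})^{2-p}\approx (1-|z|^{2})^{2-p}$ throughout $D(z,r)$; this is immediate once $1-|w|\approx 1-|z|$ is established, and it holds for both $p<2$ and $p>2$ because the exponent $2-p$ is fixed and the bases are comparable. In the case $p=1$ the argument is entirely explicit once the identity for $1-|\alpha_{w}(z)|^{2}$ is invoked, so there is essentially no obstacle beyond recalling the comparability of $\|f\|_{1}$ with the infimum of $\sum_{j}|c_{j}|$.
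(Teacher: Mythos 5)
Your proof is correct, but it diverges from the paper's own argument in both halves, in instructive ways. For $1<p<\infty$ the paper simply observes that $f\in B^{p}$ means $f'\in L_{a}^{p}(dA_{p-2})$ and quotes the standard point-evaluation estimate for that weighted Bergman space; you prove this estimate from scratch via subharmonicity of $|f'|^{p}$ and the sub-mean value inequality on $D(z,(1-|z|)/2)$ --- the same underlying mechanism, but self-contained, and your exponent bookkeeping (using $r\approx 1-|z|^{2}$ and $(1-|w|^{2})^{2-p}\approx(1-|z|^{2})^{2-p}$ on $D(z,r)$) is valid for every fixed $p$. The genuine difference is at $p=1$: the paper first bounds $|f(z)|\lesssim\|f\|_{1}$ pointwise (each atom $\alpha_{w_{j}}$ has modulus at most $1$) and then applies Cauchy's estimate to $f'$ on the circle of radius $(1-|z|)/2$ centered at $z$; you instead differentiate the atomic series term by term and dominate each $|\alpha_{w}'(z)|$ by $1/(1-|z|^{2})$ via the Schwarz--Pick identity $1-|\alpha_{w}(z)|^{2}=(1-|w|^{2})(1-|z|^{2})/|1-\overline{w}z|^{2}\le 1$. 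Your route buys a sharper conclusion --- the inequality $|f'(z)|\le\|f\|_{1}/(1-|z|^{2})$ with constant exactly $1$ --- at the cost of computing the explicit derivative of the automorphisms and (implicitly) justifying termwise differentiation, which the uniform convergence of $\sum c_{j}\alpha_{w_{j}}$ supplies by Weierstrass's theorem; the paper's route needs nothing about the atoms beyond their uniform boundedness, which is why Cauchy's estimate is the natural closing step there. Both arguments are complete and yield the lemma.
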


\begin{proof}
We have to consider the two cases $p \neq 1$ and $p =1$. 
For the case $p \neq 1$, by the definition of the space $B^{p}$, 
$f \in B^{p}$ if and only if $f^{\prime}$ belongs to the classical weighted Bergman space $L_{a}^{p}(dA_{p-2})$. 
Hence $f^{\prime}$ has the following point evaluation estimate: 
\[
|f^{\prime}(z)| \le \dfrac{\|f^{\prime}\|_{L_{a}^{p}(dA_{p-2})}}{1-|z|^{2}}
\]
for all $z \in \mathbb{D}$. Since $\|f^{\prime}\|_{L_{a}^{p}(dA_{p-2})} \le \|f\|_{p}$, 
we obtain the desired estimate. 
To prove the case $p=1$, we use the atomic decomposition of $f \in B^{1}$. 
If $f \in B^{1}$, we can choose sequences $\{c_{j}\} \in l^{1}$ and $\{w_{j}\} \subset \mathbb{D}$ 
such that $f = c_{0} + \sum c_{j} {\alpha}_{w_{j}}$. 
Thus we have $|f(z)| \lesssim \sum |c_{j}|$ for all $z \in \mathbb{D}$. 
By taking the infimum with respect to all such representation of $f$, 
we obtain $|f(z)| \lesssim \|f\|_{1}$ for all $z \in \mathbb{D}$. 
An application of Cauchy's estimate to $f^{\prime}$ on the circle with
center at $z$ and radius $(1-|z|)/2$ shows 
$|f^{\prime}(z)|\lesssim \|f\|_{1}/(1-|z|^{2})$ for all $z \in \mathbb{D}$.
\end{proof}

\begin{lem}\label{lem:lip}
Let $1 \le p < \infty$ and $f \in B^{p}$. 
Then 
\[
|(1-|z|^{2})f^{\prime}(z) - (1-|w|^{2})f^{\prime}(w)| \lesssim \|f\|_{p}\rho(z,w)
\]
for all $\{z, w \} \subset \mathbb{D}$.
\end{lem}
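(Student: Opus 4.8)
The plan is to exploit the Möbius invariance of $B^{p}$ in order to reduce the general estimate to the special case $w=0$, and then to control the unimodular factor that this reduction produces. Throughout write $g(z)=(1-|z|^{2})f^{\prime}(z)$ and recall that $\rho(z,w)=|\alpha_{w}(z)|=|z-w|/|1-\overline{w}z|$. By Lemma \ref{lem:growth} the function $g$ is bounded on $\mathbb{D}$ by a constant multiple of $\|f\|_{p}$; this crude bound already settles the inequality whenever $\rho(z,w)\ge 1/2$, since then $|g(z)-g(w)|\le 2\sup_{\zeta}|g(\zeta)|\lesssim \|f\|_{p}\lesssim \|f\|_{p}\rho(z,w)$. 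Hence in each step below only the regime of small $\rho(z,w)$ needs attention.

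First I would establish the base case $w=0$, namely $|f^{\prime}(0)-(1-|z|^{2})f^{\prime}(z)|\lesssim \|f\|_{p}|z|$. For $|z|\ge 1/2$ this is the trivial bound just mentioned. For $|z|\le 1/2$ I split $g(z)-g(0)=(f^{\prime}(z)-f^{\prime}(0))-|z|^{2}f^{\prime}(z)$. The second term is at most $|z|^{2}\|f\|_{p}/(1-|z|^{2})\lesssim \|f\|_{p}|z|$, because $1-|z|^{2}\ge 3/4$ there. For the first term I write $f^{\prime}(z)-f^{\prime}(0)=\int_{[0,z]}f^{\prime\prime}(\zeta)\,d\zeta$ and bound $\sup_{|\zeta|\le 1/2}|f^{\prime\prime}(\zeta)|$ by Cauchy's estimate on the circle $|s|=3/4$ together with Lemma \ref{lem:growth}; this gives $|f^{\prime}(z)-f^{\prime}(0)|\lesssim \|f\|_{p}|z|$.

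For the general case I set $h=f\circ \alpha_{w}-f(w)$. Since $B^{p}$ is Möbius invariant and the subtracted constant does not affect the seminorm, $\|h\|_{p}\lesssim \|f\|_{p}$. Applying the base case to $h$ at the point $\zeta=\alpha_{w}(z)$, for which $|\zeta|=\rho(z,w)$, yields $|h^{\prime}(0)-(1-|\zeta|^{2})h^{\prime}(\zeta)|\lesssim \|f\|_{p}\rho(z,w)$. Using the identities $\alpha_{w}^{\prime}(0)=-(1-|w|^{2})$, $1-\overline{w}\alpha_{w}(z)=(1-|w|^{2})/(1-\overline{w}z)$ and $1-|\alpha_{w}(z)|^{2}=(1-|w|^{2})(1-|z|^{2})/|1-\overline{w}z|^{2}$, a direct computation identifies $h^{\prime}(0)=-(1-|w|^{2})f^{\prime}(w)$ and $(1-|\zeta|^{2})h^{\prime}(\zeta)=-(1-|z|^{2})f^{\prime}(z)\,u$, where $u=(1-\overline{w}z)/(1-w\overline{z})$ is unimodular. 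Thus the base case delivers $|(1-|z|^{2})f^{\prime}(z)\,u-(1-|w|^{2})f^{\prime}(w)|\lesssim \|f\|_{p}\rho(z,w)$.

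The main obstacle is precisely this spurious rotation factor $u$: the reduction controls $g(z)u-g(w)$ rather than $g(z)-g(w)$. I remove it by the triangle inequality, estimating $|g(z)-g(w)|\le (1-|z|^{2})|f^{\prime}(z)|\,|1-u|+|g(z)u-g(w)|$. The second term is already controlled, and for the first I combine the bound $(1-|z|^{2})|f^{\prime}(z)|\lesssim \|f\|_{p}$ from Lemma \ref{lem:growth} with the elementary estimate $|1-u|=2|\mathrm{Im}(\overline{w}z)|/|1-\overline{w}z|=2|\mathrm{Im}(\overline{w}(z-w))|/|1-\overline{w}z|\le 2|w|\,|z-w|/|1-\overline{w}z|\le 2\rho(z,w)$, where I used $\mathrm{Im}(\overline{w}z)=\mathrm{Im}(\overline{w}(z-w))$ and $|1-w\overline{z}|=|1-\overline{w}z|$. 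Assembling the two pieces gives the asserted inequality.
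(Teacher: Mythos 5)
Your proof is correct, and it takes a genuinely different route from the paper's. The paper settles this lemma by citation: Proposition 2.2 of Hosokawa and Ohno \cite{HosOh} already gives $|(1-|z|^{2})f^{\prime}(z)-(1-|w|^{2})f^{\prime}(w)|\lesssim \rho(z,w)\sup_{\zeta\in\mathbb{D}}(1-|\zeta|^{2})|f^{\prime}(\zeta)|$ for Bloch functions, and Lemma \ref{lem:growth} supplies the embedding of $B^{p}$ into the Bloch space with $\sup_{\zeta\in\mathbb{D}}(1-|\zeta|^{2})|f^{\prime}(\zeta)|\lesssim\|f\|_{p}$, so the two facts simply concatenate. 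You instead re-derive that Bloch-type Lipschitz property from scratch: the trivial bound when $\rho(z,w)\ge 1/2$, a Cauchy-estimate base case at $w=0$, conjugation by $\alpha_{w}$, and removal of the unimodular factor $u=(1-\overline{w}z)/(1-w\overline{z})$ via $|1-u|\le 2\rho(z,w)$. I checked the identities $h^{\prime}(0)=-(1-|w|^{2})f^{\prime}(w)$, $(1-|\zeta|^{2})h^{\prime}(\zeta)=-(1-|z|^{2})f^{\prime}(z)u$, and the imaginary-part estimate $|\mathrm{Im}(\overline{w}z)|=|\mathrm{Im}(\overline{w}(z-w))|\le|w||z-w|$; they are all right. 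The trade-off is clear: the paper's argument is two lines but sends the reader to \cite{HosOh}, while yours is self-contained and makes visible that the lemma needs nothing beyond the growth estimate and conformal identities. One caution: the single uncited fact you invoke, M\"{o}bius invariance of $B^{p}$ (so that $\|f\circ\alpha_{w}-f(w)\|_{p}\lesssim\|f\|_{p}$), is exact for $1<p<\infty$ (the seminorm is preserved under the change of variables $\zeta=\alpha_{w}(z)$, using $(1-|z|^{2})|\alpha_{w}^{\prime}(z)|=1-|\alpha_{w}(z)|^{2}$), but for $p=1$ with the paper's atomic norm it requires the extra observation that $\alpha_{w_{j}}\circ\alpha_{w}=e^{i\theta_{j}}\alpha_{v_{j}}$ with $v_{j}=\alpha_{w}(w_{j})$, plus $|f(w)|\lesssim\|f\|_{1}$ to absorb the subtracted constant. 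You could sidestep this point entirely: your base-case argument uses only that the function in question obeys the growth bound of Lemma \ref{lem:growth}, and $h$ inherits that bound directly from $f$ via $|h^{\prime}(\xi)|=|f^{\prime}(\alpha_{w}(\xi))|\,|\alpha_{w}^{\prime}(\xi)|$ and $|\alpha_{w}^{\prime}(\xi)|=(1-|\alpha_{w}(\xi)|^{2})/(1-|\xi|^{2})$, with no norm invariance needed.
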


\begin{proof}
In \cite[Proposition 2.2]{HosOh}, Hosokawa and Ohno proved that  
\[
|(1-|z|^{2})f^{\prime}(z) - (1-|w|^{2})f^{\prime}(w)| \lesssim \rho(z,w) \sup_{\zeta \in \mathbb{D}}(1-|\zeta|^{2})|f^{\prime}(\zeta)|
\]
for $f$ belongs to the Bloch space $\mathcal{B}$ and $\{z, w \} \subset \mathbb{D}$. 
Since Lemma \ref{lem:growth} imply that $\mathcal{B} \subset B^{p} ~ (1 \le p < \infty)$ and 
$\sup_{\zeta \in \mathbb{D}}(1-|\zeta|^{2})|f^{\prime}(\zeta)| \lesssim \|f\|_{p}$, 
the desired estimate can be verified by the above estimate. 
\end{proof}

A compact subset of $\mathcal{B}_{\nu ,0}$ can be characterized as following. 
The same result for the usual little Bloch space $\mathcal{B}_{0}$ 
was proved by Madigan and Matheson \cite{MaMa}. 
By a slightly modification of their proof, we can prove the following lemma. 

\begin{lem}\label{lem:bddcomp}
A closed subset $L$ in $\mathcal{B}_{\nu , 0}$ is compact if and only if 
it is a bounded subset in $\mathcal{B}_{\nu}$ and satisfies 
\[
\lim_{|z| \to 1^{-}} \sup_{f \in L}\nu(z)|f^{\prime}(z)| = 0. 
\]
\end{lem}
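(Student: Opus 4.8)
The plan is to adapt the classical Arzel\`a--Ascoli-type argument of Madigan and Matheson for the little Bloch space to the weighted setting. Recall that in a metric space, a closed set is compact if and only if it is sequentially compact, so I would work with sequences. The necessity direction is the routine half: if $L$ is compact then it is certainly bounded in $\mathcal{B}_{\nu}$ (compact subsets of a normed space are bounded), and the uniform limit condition follows from a standard covering argument. For the latter, cover $L$ by finitely many balls of radius $\varepsilon$ centered at $f_1,\dots,f_N \in L$; since each $f_k \in \mathcal{B}_{\nu,0}$ individually satisfies $\nu(z)|f_k^{\prime}(z)| \to 0$, one can choose $\delta$ so that $\nu(z)|f_k^{\prime}(z)| < \varepsilon$ for all $k$ whenever $|z| > 1-\delta$, and then the ball structure transfers this smallness uniformly over $f \in L$.

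The sufficiency direction is where the real work lies. Assume $L$ is closed, bounded in $\mathcal{B}_{\nu}$, and satisfies the uniform vanishing condition; I would show any sequence $\{f_n\} \subset L$ has a subsequence converging in the $\mathcal{B}_{\nu}$ norm to some element of $L$. The key preliminary observation is that norm-boundedness in $\mathcal{B}_{\nu}$ forces a locally uniform growth bound on $\{f_n\}$ and $\{f_n^{\prime}\}$ on compact subsets of $\mathbb{D}$, so by Montel's theorem one extracts a subsequence (still denoted $\{f_n\}$) converging uniformly on compact subsets to some $f \in H(\mathbb{D})$, with $f_n^{\prime} \to f^{\prime}$ also locally uniformly. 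One checks $f \in \mathcal{B}_{\nu}$ via the bound, and then that $f \in \mathcal{B}_{\nu,0}$ using the uniform vanishing hypothesis on $L$.

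The heart of the matter is upgrading this locally uniform convergence to convergence in the $\mathcal{B}_{\nu}$ norm, i.e.\ showing $\sup_{z} \nu(z)|f_n^{\prime}(z) - f^{\prime}(z)| \to 0$. Here I would split the supremum over $\mathbb{D}$ into two regions. On the outer annulus $\{|z| > 1-\delta\}$, the uniform vanishing condition $\lim_{|z|\to 1^-}\sup_{f \in L}\nu(z)|f^{\prime}(z)| = 0$ controls $\nu(z)|f_n^{\prime}(z)|$ uniformly in $n$, and the same smallness passes to the limit $f$, so the difference is small there independent of $n$ once $\delta$ is chosen. On the inner compact region $\{|z| \le 1-\delta\}$, the weight $\nu$ is bounded (being continuous) and $f_n^{\prime} \to f^{\prime}$ uniformly by Montel, so $\nu(z)|f_n^{\prime}(z) - f^{\prime}(z)|$ is small for large $n$. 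Combining the two estimates yields norm convergence, and closedness of $L$ gives $f \in L$. The main obstacle I anticipate is handling the outer annulus cleanly: one must verify that the limit function $f$ inherits the uniform smallness near the boundary from the sequence, which requires interchanging the locally uniform limit with the boundary behavior, and then quantifying both $\nu|f_n^{\prime}|$ and $\nu|f^{\prime}|$ simultaneously on $\{|z| > 1-\delta\}$ using the single hypothesis on $L$.
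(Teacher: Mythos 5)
Your proposal is correct and is essentially the proof the paper intends: the paper gives no details, deferring to Madigan and Matheson's argument for $\mathcal{B}_0$, and your Montel-theorem extraction plus the inner-disk/outer-annulus splitting (using that the continuous positive weight $\nu$ is bounded above and below on compact subsets of $\mathbb{D}$) is precisely the ``slight modification'' of that classical argument. The step you flag as the main obstacle is in fact routine, since for each fixed $z$ the bound $\nu(z)|f_n^{\prime}(z)| \le \sup_{g \in L}\nu(z)|g^{\prime}(z)|$ passes to the limit function pointwise.
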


The following result is appeared in our previous work \cite{ShUe}. 
We will need it in the argument of the compactness in Section \ref{sec:comp}.
\begin{thm}\label{thm:blochbdd}
Let $1 \le p < \infty$ and $\{\varphi , \psi\}$ a pair of analytic self-maps of $\mathbb{D}$. 
Then the following statements are equivalent:
\begin{enumerate}
\item[(i)] $C_{\varphi} - C_{\psi} \,:\, B^{p} \to \mathcal{B}_{\nu}$ is bounded, 
\item[(ii)] $\varphi$ and $\psi$ satisfy the following two conditions:
\[
\sup_{z \in \mathbb{D}} \frac{\nu(z)|{\varphi}^{\prime}(z)|}{1-|\varphi(z)|^{2}}\rho(\varphi(z), \psi(z))<\infty
\]
and 
\[
\sup_{z \in \mathbb{D}}\left| \frac{\nu(z){\varphi}^{\prime}(z)}{1-|\varphi(z)|^{2}}
 - \frac{\nu(z){\psi}^{\prime}(z)}{1-|\psi(z)|^{2}} \right| < \infty ,
 \]
\item[(iii)] $\varphi$ and $\psi$ satisfy the following two conditions:
\[
\sup_{z \in \mathbb{D}} \frac{\nu(z)|{\psi}^{\prime}(z)|}{1-|\psi(z)|^{2}}\rho(\varphi(z), \psi(z))<\infty
\]
and 
\[
\sup_{z \in \mathbb{D}}\left| \frac{\nu(z){\varphi}^{\prime}(z)}{1-|\varphi(z)|^{2}}
 - \frac{\nu(z){\psi}^{\prime}(z)}{1-|\psi(z)|^{2}} \right| < \infty .
 \]
\end{enumerate}
\end{thm}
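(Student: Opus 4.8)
The plan is to establish $(i)\Leftrightarrow(ii)$ in full and then obtain $(ii)\Leftrightarrow(iii)$ almost for free. Note first that the second conditions in $(ii)$ and $(iii)$ are literally the same. The reverse triangle inequality gives
\[
\left| \frac{\nu(z)|\varphi'(z)|}{1-|\varphi(z)|^2} - \frac{\nu(z)|\psi'(z)|}{1-|\psi(z)|^2}\right| \le \left| \frac{\nu(z)\varphi'(z)}{1-|\varphi(z)|^2} - \frac{\nu(z)\psi'(z)}{1-|\psi(z)|^2}\right|,
\]
and since $\rho(\varphi(z),\psi(z))\le 1$, multiplying by $\rho(\varphi(z),\psi(z))$ and invoking the common second condition shows that the two first conditions differ by a bounded quantity. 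Hence one holds if and only if the other does, so it suffices to treat $(i)\Leftrightarrow(ii)$.

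For the sufficiency $(ii)\Rightarrow(i)$, I would start from the pointwise identity $\left( (C_\varphi - C_\psi) f\right)'(z) = f'(\varphi(z))\varphi'(z) - f'(\psi(z))\psi'(z)$ and set $g(w) = (1-|w|^2) f'(w)$, so that $f'(w)=g(w)/(1-|w|^2)$. Substituting and adding and subtracting the cross term produces the splitting
\[
\nu(z)\left( (C_\varphi - C_\psi) f\right)'(z) = \frac{\nu(z)\varphi'(z)}{1-|\varphi(z)|^2}\bigl(g(\varphi(z)) - g(\psi(z))\bigr) + g(\psi(z))\left( \frac{\nu(z)\varphi'(z)}{1-|\varphi(z)|^2} - \frac{\nu(z)\psi'(z)}{1-|\psi(z)|^2}\right).
\]
Lemma \ref{lem:lip} bounds $|g(\varphi(z)) - g(\psi(z))| \lesssim \|f\|_p\, \rho(\varphi(z),\psi(z))$, so the first term is controlled by $\|f\|_p$ times the first quantity in $(ii)$; Lemma \ref{lem:growth} gives $|g(\psi(z))| \lesssim \|f\|_p$, so the second term is controlled by $\|f\|_p$ times the second quantity in $(ii)$. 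Taking the supremum over $z$ bounds the $\mathcal{B}_\nu$-seminorm of $(C_\varphi - C_\psi)f$, while the value at the origin, $|f(\varphi(0)) - f(\psi(0))|$, is dominated by a constant multiple of $\|f\|_p$ through the Bloch-distance estimate underlying Lemma \ref{lem:lip}. This gives boundedness.

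The necessity $(i)\Rightarrow(ii)$ is the substantive direction and requires test functions. For a fixed $z$, write $a=\varphi(z)$ and $b=\psi(z)$, and test the inequality $\nu(z)|((C_\varphi-C_\psi)f)'(z)| \le \|C_\varphi-C_\psi\|\,\|f\|_p$ against unit-norm functions. By the splitting above, each choice of $f$ controls a linear combination
\[
\frac{\nu(z)\varphi'(z)}{1-|\varphi(z)|^2}\bigl(g(a)-g(b)\bigr) + g(b)\left(\frac{\nu(z)\varphi'(z)}{1-|\varphi(z)|^2} - \frac{\nu(z)\psi'(z)}{1-|\psi(z)|^2}\right)
\]
of the two target quantities. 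The strategy is to select test functions that decouple the two terms: one with $g(a)=g(b)$ and $g(b)$ bounded below in modulus isolates the second quantity in $(ii)$, while another with $g(b)=0$ and $g(a)-g(b)$ comparable to $\rho(a,b)$ isolates the first. The natural building blocks are the normalized functions $f_a(w)=((1-|a|^2)/(1-\bar a w))^{\beta}$ (for $1<p<\infty$ and a fixed $\beta>0$), which satisfy $\|f_a\|_p\lesssim 1$ uniformly and $(1-|w|^2)f_a'(w)\approx\bar a$ at $w=a$; suitable finite combinations of $f_a$ and $f_b$, adjusted so that $g$ takes the prescribed value and increment at the two image points, furnish the required functions. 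For $p=1$ the same scheme is run using the atomic decomposition of $B^1$ in place of the weighted-Bergman-space point evaluation.

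I expect the construction of these test functions to be the main obstacle. One must produce, uniformly in $z$, members of $B^p$ of bounded norm whose normalized derivative $(1-|w|^2)f'(w)$ is prescribed at, and has a prescribed increment between, the two image points $\varphi(z)$ and $\psi(z)$, whose separation $\rho(\varphi(z),\psi(z))$ ranges over all of $[0,1)$; the degeneracy near the origin and the regime $\rho(a,b)\to 1$ must both be accommodated, and the argument must run uniformly across $1\le p<\infty$, with the endpoint $p=1$ relying on the atomic description of $B^1$. Once such functions are in hand, feeding them into the boundedness inequality and using Lemmas \ref{lem:growth} and \ref{lem:lip} to absorb the residual terms yields both conditions in $(ii)$.
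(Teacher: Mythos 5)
Your reduction of (ii)\,$\Leftrightarrow$\,(iii) to the common second condition (reverse triangle inequality plus $\rho\le 1$) is correct, and your proof of (ii)\,$\Rightarrow$\,(i) is complete: the splitting of $\nu(z)((C_\varphi-C_\psi)f)'(z)$ into a term controlled by Lemma~\ref{lem:lip} times the first quantity in (ii) and a term controlled by Lemma~\ref{lem:growth} times the second quantity is exactly the mechanism this paper uses (in the sufficiency half of Theorem~\ref{thm:cmpt}); note the paper itself does not prove Theorem~\ref{thm:blochbdd} but imports it from \cite{ShUe}, so the compactness proof is the closest in-paper comparison.

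The direction (i)\,$\Rightarrow$\,(ii), however, is a genuine gap: you outline a decoupling strategy but never construct the test functions, and you yourself flag the construction as the main obstacle. Two concrete problems with your plan: first, your building blocks $f_a(w)=((1-|a|^2)/(1-\bar a w))^{\beta}$ satisfy $(1-|w|^2)f_a'(w)\big|_{w=a}=\beta\bar a$, which degenerates as $a\to 0$, so they cannot uniformly prescribe values of $g$; second, any cross-term evaluation at the other point $b$ inevitably carries the factor $1-\rho(a,b)^2$ (since $(1-|a|^2)(1-|b|^2)/|1-\bar a b|^2=1-\rho(a,b)^2$), which degenerates exactly in the regime $\rho\to 1$ that most needs to be controlled, and you give no argument there. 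The standard resolution, visible in the necessity half of the proof of Theorem~\ref{thm:cmpt}, is to test against the M\"obius atoms $\alpha_{\varphi(z)}$ and $\alpha_{\varphi(z)}^2$ (and their $\varphi\leftrightarrow\psi$ swaps): $\alpha_a$ has normalized derivative of modulus exactly $1$ at $a$ (no degeneracy at the origin), while $\alpha_a^2$ has vanishing derivative at $a$, which isolates the other symbol and yields
\[
\frac{\nu(z)|\psi'(z)|}{1-|\psi(z)|^2}\,\rho(\varphi(z),\psi(z))\bigl(1-\rho(\varphi(z),\psi(z))^2\bigr)\lesssim \|C_\varphi-C_\psi\| .
\]
The $\rho\to 1$ degeneracy is then removed by a short contradiction argument: if $\frac{\nu|\varphi'|}{1-|\varphi|^2}\rho$ were unbounded along some sequence, the displayed inequality (with roles swapped) forces $1-\rho^2\to 0$ there, and then the two $\alpha$-inequalities combine to bound $\frac{\nu|\varphi'|}{1-|\varphi|^2}$ after all, a contradiction; the complex-difference condition in (ii) follows afterwards from the $\alpha_{\varphi(z)}$ test together with the already-established $\rho$-condition. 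Without this (or an equivalent) completion, your proposal establishes only one implication of the theorem.
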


\section{Boundedness of $C_{\varphi} - C_{\psi}$}\label{sec:bdd}

Before considering the compactness of $C_{\varphi} - C_{\psi}$, 
we have to mention the boundedness of it. 
The following Theorem \ref{thm:general_bdd} can be found in \cite[Theorem 3.4]{HosOh}. 
They proved the result for the case that $C_{\varphi}-C_{\psi}$ is acting on the little Bloch space $\mathcal{B}_{0}$. 
Under the assumption on the boundedness of $C_{\varphi} - C_{\psi}$ and the density of the polynomial set in the domain space, 
we can generalize their result as following. 

\begin{thm}\label{thm:general_bdd}
Let $X$ be a Banach space of analytic functions over $\mathbb{D}$
which the polynomial set is dense in $X$. 
For each pair $\{\varphi , \psi\}$ of analytic self-maps of $\mathbb{D}$ 
with $C_{\varphi} - C_{\psi} \, :\, X \to \mathcal{B}_{\nu}$ is bounded, 
the following conditions are equivalent:
\begin{enumerate}
\item[(a)] $C_{\varphi} - C_{\psi} \, :\, X \to \mathcal{B}_{\nu, 0}$ is bounded, 
\item[(b)] $\varphi - \psi \in \mathcal{B}_{\nu, 0}$ and ${\varphi}^{2} - {\psi}^{2} \in \mathcal{B}_{\nu, 0}$,
\item[(c)] $\varphi - \psi \in \mathcal{B}_{\nu , 0}$ and 
\[
\lim_{|z| \to 1^{-}}\nu(z)|\varphi(z) - \psi(z)|\max\{|{\varphi}^{\prime}(z)|, \, |{\psi}^{\prime}(z)|\} = 0.
\]
\end{enumerate}
\end{thm}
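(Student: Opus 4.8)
The plan is to establish the cycle $(a)\Rightarrow(b)\Leftrightarrow(c)\Rightarrow(a)$, working throughout with the derivative formula
\[
[(C_{\varphi}-C_{\psi})f]^{\prime}(z)=f^{\prime}(\varphi(z))\varphi^{\prime}(z)-f^{\prime}(\psi(z))\psi^{\prime}(z),
\]
so that membership of $(C_{\varphi}-C_{\psi})f$ in $\mathcal{B}_{\nu,0}$ is read off as a vanishing statement for $\nu(z)$ times this expression as $|z|\to1^{-}$. For $(a)\Rightarrow(b)$ I would simply insert the monomials $f(z)=z$ and $f(z)=z^{2}$, which lie in $X$ since the polynomials do. As $(C_{\varphi}-C_{\psi})z=\varphi-\psi$ and $(C_{\varphi}-C_{\psi})z^{2}=\varphi^{2}-\psi^{2}$, the hypothesis that the range is $\mathcal{B}_{\nu,0}$ delivers both membership statements of (b) at once.

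For the equivalence $(b)\Leftrightarrow(c)$ I would differentiate via the polarization identity
\[
(\varphi^{2}-\psi^{2})^{\prime}=(\varphi+\psi)(\varphi^{\prime}-\psi^{\prime})+(\varphi-\psi)(\varphi^{\prime}+\psi^{\prime}).
\]
Since $\varphi,\psi$ map into $\mathbb{D}$ we have $|\varphi+\psi|\le2$, and since $\varphi-\psi\in\mathcal{B}_{\nu,0}$ is common to both (b) and (c) we have $\nu(z)|\varphi^{\prime}(z)-\psi^{\prime}(z)|\to0$; hence the first summand is negligible in the limit. Therefore $\varphi^{2}-\psi^{2}\in\mathcal{B}_{\nu,0}$ is equivalent to $\nu(z)|\varphi(z)-\psi(z)|\,|\varphi^{\prime}(z)+\psi^{\prime}(z)|\to0$. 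To replace $|\varphi^{\prime}+\psi^{\prime}|$ by $\max\{|\varphi^{\prime}|,|\psi^{\prime}|\}$ I would use $|\varphi^{\prime}+\psi^{\prime}|\le2\max\{|\varphi^{\prime}|,|\psi^{\prime}|\}$ in one direction and $2\max\{|\varphi^{\prime}|,|\psi^{\prime}|\}\le|\varphi^{\prime}+\psi^{\prime}|+|\varphi^{\prime}-\psi^{\prime}|$ in the other, the extra term being controlled by $\nu|\varphi-\psi|\,|\varphi^{\prime}-\psi^{\prime}|\le2\nu|\varphi^{\prime}-\psi^{\prime}|\to0$.

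The heart of the argument is $(c)\Rightarrow(a)$. First I would show, using the decomposition
\[
(\varphi^{n}-\psi^{n})^{\prime}=n\varphi^{n-1}(\varphi^{\prime}-\psi^{\prime})+n(\varphi^{n-1}-\psi^{n-1})\psi^{\prime}
\]
together with $|\varphi^{n-1}|\le1$ and $|\varphi^{n-1}-\psi^{n-1}|\le(n-1)|\varphi-\psi|$, the bound
\[
\nu|(\varphi^{n}-\psi^{n})^{\prime}|\le n\,\nu|\varphi^{\prime}-\psi^{\prime}|+n(n-1)\,\nu|\varphi-\psi|\,|\psi^{\prime}|,
\]
whose right-hand side tends to $0$ by $\varphi-\psi\in\mathcal{B}_{\nu,0}$ and condition (c). Thus every power $\varphi^{n}-\psi^{n}$ lies in $\mathcal{B}_{\nu,0}$, and by linearity $(C_{\varphi}-C_{\psi})p\in\mathcal{B}_{\nu,0}$ for every polynomial $p$. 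For a general $f\in X$ I would pick polynomials $p_{k}\to f$ in $X$; the standing assumption that $C_{\varphi}-C_{\psi}\colon X\to\mathcal{B}_{\nu}$ is bounded then forces $(C_{\varphi}-C_{\psi})p_{k}\to(C_{\varphi}-C_{\psi})f$ in $\mathcal{B}_{\nu}$, and since $\mathcal{B}_{\nu,0}$ is a closed subspace of $\mathcal{B}_{\nu}$ the limit stays in $\mathcal{B}_{\nu,0}$. This yields the boundedness in (a) and closes the cycle.

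I expect the approximation step in $(c)\Rightarrow(a)$ to be the decisive point: it is precisely here that the standing hypothesis on the boundedness of $C_{\varphi}-C_{\psi}$ into $\mathcal{B}_{\nu}$ cannot be dispensed with, for without it there is no mechanism transferring the little-space membership from polynomials to all of $X$. The only other genuinely technical ingredient is selecting the algebraic splitting of $(\varphi^{n}-\psi^{n})^{\prime}$ that isolates exactly the two quantities already tamed by $\varphi-\psi\in\mathcal{B}_{\nu,0}$ and by condition (c).
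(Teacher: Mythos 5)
Your proposal is correct and follows essentially the same route as the paper: (a)$\Rightarrow$(b) via the test functions $z$ and $z^{2}$, an elementary algebraic passage between (b) and (c), and (c)$\Rightarrow$(a) by showing each $\varphi^{n}-\psi^{n}$ lies in $\mathcal{B}_{\nu,0}$ and then invoking density of polynomials, the standing boundedness into $\mathcal{B}_{\nu}$, and closedness of $\mathcal{B}_{\nu,0}$ in $\mathcal{B}_{\nu}$. Your polarization identity and the telescoping split $n\varphi^{n-1}(\varphi^{\prime}-\psi^{\prime})+n(\varphi^{n-1}-\psi^{n-1})\psi^{\prime}$ are only mild algebraic variants of the paper's factorization $\varphi^{n}-\psi^{n}=(\varphi-\psi)\sum_{k=0}^{n-1}\varphi^{n-1-k}\psi^{k}$; both isolate exactly the two quantities controlled by $\varphi-\psi\in\mathcal{B}_{\nu,0}$ and condition (c), so the essential mechanism is identical.
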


\begin{proof}
The direction (a) $\Rightarrow$ (b) is verified by test functions $p_{1}(z) = z$ and $p_{2}(z) = z^{2}$ easily. 
Hence it is enough to prove directions (b) $\Rightarrow$ (c) and (c) $\Rightarrow$ (a). 
Now we will prove (b) $\Rightarrow$ (c). 
Since $\varphi - \psi \in \mathcal{B}_{\nu , 0}$ implies $\nu(z)|{\varphi}^{\prime}(z) - {\psi}^{\prime}(z)| \to 0$ as 
$|z| \to 1^{-}$ and ${\varphi}^{2} - {\psi}^{2} \in \mathcal{B}_{\nu , 0}$ implies 
$\nu(z)|\varphi(z){\varphi}^{\prime}(z) - \psi(z){\psi}^{\prime}(z)| \to 0$ as $|z| \to 1^{-}$, 
we obtain that 
\begin{align*}
& \nu(z)|\varphi(z) - \psi(z)||{\varphi}^{\prime}(z)| \\
& \le \nu(z)|\varphi(z){\varphi}^{\prime}(z) - \psi(z){\psi}^{\prime}(z)| + \nu(z)|{\varphi}^{\prime}(z) - {\psi}^{\prime}(z)||\psi(z)| \\
& \le \nu(z)|\varphi(z){\varphi}^{\prime}(z) - \psi(z){\psi}^{\prime}(z)| + \nu(z)|{\varphi}^{\prime}(z) - {\psi}^{\prime}(z)|
\to 0,
\end{align*}
as $|z| \to 1^{-}$. We also have $\nu(z)|\varphi(z) - \psi(z)||{\psi}^{\prime}(z)| \to 0$ as $|z| \to 1^{-}$, 
and so the condition (c) is true. 
In order to prove (c) $\Rightarrow$ (a), we assume (c). 
For each $n \ge 1$, we put $p_{n}(z) = z^{n}$. 
Then 
\[
(C_{\varphi}-C_{\psi})p_{n}(z) = {\varphi}^{n}(z) - {\psi}^{n}(z) = (\varphi(z) - \psi(z))
\sum_{k=0}^{n-1}{\varphi}^{n-1-k}(z){\psi}^{k}(z).
\]
We will claim that $(C_{\varphi}-C_{\psi})p_{n} \in \mathcal{B}_{\nu, 0}$. 
Since 
\begin{align*}
& \left( \sum_{k=0}^{n-1}{\varphi}^{n-1-k}(z){\psi}^{k}(z) \right)^{\prime} \\
& = 
(n-1){\varphi}^{n-2}(z) {\varphi}^{\prime}(z) + 
(n-2){\varphi}^{n-3}(z){\varphi}^{\prime}(z)\psi(z) + {\varphi}^{n-2}(z){\psi}^{\prime}(z) \\
& \qquad 
+ \dotsm + 
{\varphi}^{\prime}(z){\psi}^{n-2}(z) + (n-2)\varphi(z){\psi}^{n-3}(z){\psi}^{\prime}(z) + (n-1){\psi}^{n-2}(z){\psi}^{\prime}(z),
\end{align*}
we have that 
\[
\left|\left( \sum_{k=0}^{n-1}{\varphi}^{n-1-k}(z){\psi}^{k}(z) \right)^{\prime}\right|
\le \frac{n(n-1)}{2}(|{\varphi}^{\prime}(z)|+|{\psi}^{\prime}(z)|).
\]
Hence this inequality gives that 
\begin{align*}
& \nu(z)|((C_{\varphi} - C_{\psi})p_{n})^{\prime}(z)| \\
& \le 
n\nu(z)|{\varphi}^{\prime}(z) - {\psi}^{\prime}(z)| 
+ 
\frac{n(n-1)}{2}\nu(z)|\varphi(z) - \psi(z)|(|{\varphi}^{\prime}(z)|+|{\psi}^{\prime}(z)|).
\end{align*}
Combining this estimate with the condition (c), we see that $(C_{\varphi}-C_{\psi})p_{n} \in \mathcal{B}_{\nu, 0}$, 
and so $(C_{\varphi}-C_{\psi})p \in \mathcal{B}_{\nu, 0}$ for all analytic polynomial $p$. 
Since the polynomial set is dense in X, $C_{\varphi} - C_{\psi} \, :\, X \to \mathcal{B}_{\nu}$ is bounded 
and $\mathcal{B}_{\nu, 0}$ is closed in $\mathcal{B}_{\nu}$, we also see 
$(C_{\varphi}- C_{\psi})f \in \mathcal{B}_{\nu, 0}$ for $f \in X$. 
This implies the boundedness of $C_{\varphi} - C_{\psi} \, :\, X \to \mathcal{B}_{\nu, 0}$. 
\end{proof}

\begin{lem}\label{lem:dense}
For $1 \le p < \infty$, the polynomial set is dense in $B^{p}$. 
\end{lem}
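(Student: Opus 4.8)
The plan is to approximate an arbitrary $f \in B^{p}$ first by its dilations and then by polynomials. For $0 < r < 1$ write $f_{r}(z) = f(rz)$; this is analytic on $\{|z| < 1/r\}$, which contains $\overline{\mathbb{D}}$. I would first show that $f_{r} \to f$ in $B^{p}$ as $r \to 1^{-}$, and then that each $f_{r}$ is a $B^{p}$-limit of polynomials.

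Consider first $1 < p < \infty$. Since $f_{r}(0) = f(0)$ and $(f_{r})^{\prime}(z) = r f^{\prime}(rz)$, the definition of the norm gives $\|f_{r} - f\|_{p}^{p} = \int_{\mathbb{D}}|r f^{\prime}(rz) - f^{\prime}(z)|^{p}(1-|z|^{2})^{p-2}\,dA(z)$. I would split this integral over $\{|z| \le s\}$ and $\{s < |z| < 1\}$. On the compact set $\{|z| \le s\}$ the function $f^{\prime}$ is uniformly continuous, so for each fixed $s$ the first integral tends to $0$ as $r \to 1^{-}$. For the boundary annulus I would use the elementary bound $|r f^{\prime}(rz) - f^{\prime}(z)|^{p} \lesssim |f^{\prime}(rz)|^{p} + |f^{\prime}(z)|^{p}$. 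The contribution of $|f^{\prime}(z)|^{p}$ is a tail of the convergent integral defining $\|f\|_{p}^{p}$ and is small once $s$ is close to $1$. For the contribution of $|f^{\prime}(rz)|^{p}$, passing to polar coordinates and using that the integral mean $M_{p}(t, f^{\prime})$ is nondecreasing in $t$ (a consequence of the subharmonicity of $|f^{\prime}|^{p}$), I get $\int_{s<|z|<1}|f^{\prime}(rz)|^{p}(1-|z|^{2})^{p-2}\,dA(z) = \int_{s}^{1} M_{p}^{p}(rt, f^{\prime})(1-t^{2})^{p-2}\,2t\,dt \le \int_{s}^{1} M_{p}^{p}(t, f^{\prime})(1-t^{2})^{p-2}\,2t\,dt$, which is again a tail of $\|f\|_{p}^{p}$ and hence small, uniformly in $r$, when $s$ is close to $1$. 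Choosing $s$ and then $r$ suitably yields $\|f_{r} - f\|_{p} \to 0$.

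For the second step I fix $r$ and let $q_{N}$ be the $N$-th Taylor polynomial of $f_{r}$. Since $f_{r}$ is analytic on a disk strictly larger than $\overline{\mathbb{D}}$, $q_{N}^{\prime} \to f_{r}^{\prime}$ uniformly on $\overline{\mathbb{D}}$; because $\int_{\mathbb{D}}(1-|z|^{2})^{p-2}\,dA < \infty$ for $p > 1$, this forces $\|q_{N} - f_{r}\|_{p} \to 0$. Combining the two steps proves that polynomials are dense in $B^{p}$.

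The case $p = 1$ is handled in the same way using the equivalent norm $|f(0)| + |f^{\prime}(0)| + \int_{\mathbb{D}}|f^{\prime \prime}|\,dA$: now $(f_{r})^{\prime \prime}(z) = r^{2} f^{\prime \prime}(rz)$, the relevant weight is $(1-|z|^{2})^{0} \equiv 1$, and the monotonicity of $M_{1}(t, f^{\prime \prime})$ together with $\int_{\mathbb{D}}\,dA < \infty$ gives both the dilation convergence and the Taylor approximation exactly as above. The main obstacle is the boundary estimate when $1 < p < 2$, where the weight $(1-|z|^{2})^{p-2}$ is singular at $\partial \mathbb{D}$; the monotonicity of the integral means is precisely what lets me control the dilated tail uniformly in $r$ in spite of this singularity.
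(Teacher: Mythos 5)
Your proof is correct and follows essentially the same route as the paper's: approximate $f$ by its dilations $f_{r}$, control the dilated tail integral uniformly in $r$ via the monotonicity of the integral means $M_{p}(t,f^{\prime})$ (subharmonicity of $|f^{\prime}|^{p}$), use uniform continuity of $f^{\prime}$ on the compact part, and treat $p=1$ with the equivalent second-derivative norm. The only difference is cosmetic: you spell out the Taylor-polynomial approximation of $f_{r}$ (which the paper merely asserts), and your splitting of $|rf^{\prime}(rz)-f^{\prime}(z)|$ is organized slightly differently but amounts to the same estimates.
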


\begin{proof}
Each dilated function $f_{r}$ is analytic in the closed unit disk $\overline{\mathbb{D}}$, 
and so it belongs to $B^{p}$. 
Since $f_{r}$ is approximated by polynomials in $B^{p}$, 
it is enough to prove that every $f \in B^{p}$ satisfies 
$\|f - f_{r}\|_{p} \to 0 $ as $r \to 1^{-}$. 
First we will consider the case $p >1$. 
Fix $\varepsilon >0$. 
Since $f \in B^{p}$, there exists an $R \in (0,1)$ such that 
$\int_{\mathbb{D} \setminus R\overline{\mathbb{D}}} |f^{\prime}(z)|^{p}(1-|z|^{2})^{p-2}dA(z) < \varepsilon$. 
Noting that $|f^{\prime}|^{p}$ is subharmonic in $\mathbb{D}$, 
we have
\begin{align*}
\int_{\mathbb{D} \setminus R\overline{\mathbb{D}}} |f^{\prime}(rz)|^{p}(1-|z|^{2})^{p-2}dA(z) 
& = 
2\int_{R}^{1}t(1-t^{2})^{p-2}dt
\int_{0}^{2\pi}|f^{\prime}(rte^{i\theta})|^{p}\dfrac{d\theta}{2\pi}
\\
& \le 
2\int_{R}^{1}t(1-t^{2})^{p-2}dt
\int_{0}^{2\pi}|f^{\prime}(te^{i\theta})|^{p}\dfrac{d\theta}{2\pi}
\\
& = 
\int_{\mathbb{D} \setminus R\overline{\mathbb{D}}} |f^{\prime}(z)|^{p}(1-|z|^{2})^{p-2}dA(z) < \varepsilon
\end{align*}
for any $r \in (0,1)$. 
Hence we obtain 
\begin{align*}
\|f - f_{r}\|_{p}^{p}
& = \int_{\mathbb{D}}|f^{\prime}(z) - r f^{\prime}(rz)|^{p}(1-|z|^{2})^{p-2}dA(z)
\\
& \lesssim 
(1-r)^{p}\int_{\mathbb{D}}|f^{\prime}(rz)|^{p}(1-|z|^{2})^{p-2}dA(z)
\\
& \qquad \qquad +
\int_{\mathbb{D}}|f^{\prime}(z) - f^{\prime}(rz)|^{p}(1-|z|^{2})^{p-2}dA(z)
\\
& \lesssim 
(1-r)^{p}\|f\|_{p}^{p} + \varepsilon 
+ \int_{R\overline{\mathbb{D}}}|f^{\prime}(z) - f^{\prime}(rz)|^{p}(1-|z|^{2})^{p-2}dA(z).
\end{align*}
Since $f^{\prime}$ is uniform continuous on $R\overline{\mathbb{D}}$, 
it follows from this estimate that $\|f - f_{r}\|_{p} \to 0$ as $r \to 1^{-}$. 
For the case $p=1$, we obtain
\begin{align*}
\|f - f_{r}\|_{1} 
& \lesssim 
\int_{\mathbb{D}}|f^{\prime \prime}(z) - r^{2}f^{\prime \prime}(rz)|dA(z)
\\
& \le 
(1-r^{2})\int_{\mathbb{D}}|f^{\prime \prime}(rz)| dA(z) 
+ 
\int_{\mathbb{D}}|f^{\prime \prime}(rz) - f^{\prime \prime}(z)|dA(z).
\end{align*}
By the same argument as in the case $p> 1$, 
these inequalities also show that $\|f - f_{r}\|_{1} \to 0$ as $r \to 1^{-}$. 
\end{proof}

\begin{cor}\label{cor:bdd}
Let $1 \le p < \infty$ and $\{\varphi, \psi\}$ a pair of analytic self-maps of $\mathbb{D}$ 
which induces the bounded operator $C_{\varphi} - C_{\psi} \, : \, B^{p} \to \mathcal{B}_{\nu}$. 
Then the following conditions are equivalent:
\begin{enumerate}
\item[(a)] $C_{\varphi} - C_{\psi} \, :\, B^{p} \to \mathcal{B}_{\nu, 0}$ is bounded, 
\item[(b)] $\varphi - \psi \in \mathcal{B}_{\nu, 0}$ and ${\varphi}^{2} - {\psi}^{2} \in \mathcal{B}_{\nu, 0}$,
\item[(c)] $\varphi - \psi \in \mathcal{B}_{\nu , 0}$ and 
\[
\lim_{|z| \to 1^{-}}\nu(z)|\varphi(z) - \psi(z)|\max\{|{\varphi}^{\prime}(z)|, \, |{\psi}^{\prime}(z)|\} = 0.
\]
\end{enumerate}
\end{cor}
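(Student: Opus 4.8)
The plan is to recognize Corollary \ref{cor:bdd} as nothing more than the specialization of Theorem \ref{thm:general_bdd} to the concrete domain space $X = B^{p}$. Theorem \ref{thm:general_bdd} already establishes the equivalence of (a), (b), (c) for any Banach space $X$ of analytic functions over $\mathbb{D}$ in which the polynomials are dense, under the standing assumption that $C_{\varphi} - C_{\psi} \,:\, X \to \mathcal{B}_{\nu}$ is bounded. Since the hypotheses of the corollary include exactly this boundedness assumption with $X = B^{p}$, the only task is to verify that $B^{p}$ meets the two structural requirements of the theorem.

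The first requirement is that $B^{p}$ be a Banach space of analytic functions on $\mathbb{D}$. For $1 < p < \infty$ this is routine: the seminorm part of $\|\cdot\|_{p}$ is the weighted Bergman norm of $f^{\prime}$ on $L_{a}^{p}(dA_{p-2})$, and for $p = 1$ the atomic-decomposition norm serves the same purpose; in either case completeness is standard, and the growth estimate of Lemma \ref{lem:growth} guarantees that norm convergence forces locally uniform convergence, so that $B^{p}$ indeed consists of genuine analytic functions. The second requirement, that the polynomial set be dense in $B^{p}$, is precisely the content of Lemma \ref{lem:dense}, which has already been proved for every $1 \le p < \infty$.

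With both requirements confirmed, I would simply invoke Theorem \ref{thm:general_bdd} with $X = B^{p}$ to obtain the stated equivalence of (a), (b), and (c). No further computation is needed, since the directions (a) $\Rightarrow$ (b) $\Rightarrow$ (c) $\Rightarrow$ (a) have all been carried out in the proof of that theorem using the test functions $p_{1}(z) = z$, $p_{2}(z) = z^{2}$, and the monomials $p_{n}(z) = z^{n}$, together with the density of polynomials and the closedness of $\mathcal{B}_{\nu,0}$ in $\mathcal{B}_{\nu}$.

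There is no genuine obstacle at the level of the corollary itself; all the substantive work resides in Theorem \ref{thm:general_bdd} and Lemma \ref{lem:dense}. The only point demanding any care is the verification of the Banach-space structure of $B^{p}$, and even that is classical and can be dispatched by a reference to the monographs \cite{WulZhu, KZhu} cited in the introduction. Consequently the proof of Corollary \ref{cor:bdd} will amount to a single sentence applying Theorem \ref{thm:general_bdd}.
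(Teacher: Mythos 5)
Your proposal is correct and is exactly the paper's intended argument: the corollary is stated immediately after Theorem \ref{thm:general_bdd} and Lemma \ref{lem:dense} with no separate proof, precisely because it is the specialization $X = B^{p}$, with Lemma \ref{lem:dense} supplying the density of polynomials and the boundedness of $C_{\varphi}-C_{\psi}\,:\,B^{p}\to\mathcal{B}_{\nu}$ assumed in the hypothesis. Your extra remark on the Banach-space structure of $B^{p}$ is a reasonable (classical) verification that the paper leaves implicit.
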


\section{Compactness of $C_{\varphi} - C_{\psi}$}\label{sec:comp}

\begin{thm}\label{thm:cmpt}
Let $1 \le p < \infty$. 
For each $\{\varphi, \psi\}$ a pair of analytic self-maps of $\mathbb{D}$,  
$C_{\varphi} - C_{\psi} \,:\, B^{p} \to \mathcal{B}_{\nu , 0}$ is compact 
if and only if $\varphi$ and $\psi$ satisfy the following two condtions:
\begin{enumerate}
\item[(a)] $\displaystyle 
\lim_{|z| \to 1^{-}}
\max\left\{\dfrac{|{\varphi}^{\prime}(z)|}{1-|\varphi(z)|^{2}}, \dfrac{|{\psi}^{\prime}(z)|}{1-|\psi(z)|^{2}}\right\} 
\nu(z)\rho(\varphi(z), \psi(z)) = 0
$,
\item[(b)] $\displaystyle 
\lim_{|z| \to 1^{-}}
\left|
\dfrac{\nu(z){\varphi}^{\prime}(z)}{1-|\varphi(z)|^{2}} - \dfrac{\nu(z){\psi}^{\prime}(z)}{1-|\psi(z)|^{2}}
\right| = 0
$.
\end{enumerate}
\end{thm}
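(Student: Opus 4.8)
The plan is to carry the whole analysis through a single test quantity. Since $((C_{\varphi}-C_{\psi})f)^{\prime}(z)=f^{\prime}(\varphi(z))\varphi^{\prime}(z)-f^{\prime}(\psi(z))\psi^{\prime}(z)$, Lemma~\ref{lem:bddcomp} reduces both the membership of the image of the unit ball in $\mathcal{B}_{\nu,0}$ and its relative compactness to the single requirement that
\[
S(z)=\sup_{\|f\|_{p}\le 1}\nu(z)\bigl|f^{\prime}(\varphi(z))\varphi^{\prime}(z)-f^{\prime}(\psi(z))\psi^{\prime}(z)\bigr|\longrightarrow 0\quad(|z|\to1^{-}),
\]
together with $\sup_{z}S(z)<\infty$. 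Writing $a=\varphi(z)$, $b=\psi(z)$ and abbreviating
\[
D(z)=\nu(z)\Bigl|\tfrac{\varphi^{\prime}(z)}{1-|a|^{2}}-\tfrac{\psi^{\prime}(z)}{1-|b|^{2}}\Bigr|,\qquad
R_{\varphi}(z)=\tfrac{\nu(z)|\varphi^{\prime}(z)|}{1-|a|^{2}}\rho(a,b),\qquad
R_{\psi}(z)=\tfrac{\nu(z)|\psi^{\prime}(z)|}{1-|b|^{2}}\rho(a,b),
\]
conditions (a) and (b) say precisely that $\max\{R_{\varphi},R_{\psi}\}\to0$ and $D\to0$. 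Thus the entire theorem reduces to comparing $S$ with $D$, $R_{\varphi}$ and $R_{\psi}$.

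For sufficiency I would insert the term $(1-|a|^{2})f^{\prime}(a)\,\tfrac{\psi^{\prime}(z)}{1-|b|^{2}}$ into the difference defining $S$, producing the two-term bound
\[
\nu(z)(1-|a|^{2})|f^{\prime}(a)|\Bigl|\tfrac{\varphi^{\prime}(z)}{1-|a|^{2}}-\tfrac{\psi^{\prime}(z)}{1-|b|^{2}}\Bigr|
+\nu(z)\bigl|(1-|a|^{2})f^{\prime}(a)-(1-|b|^{2})f^{\prime}(b)\bigr|\tfrac{|\psi^{\prime}(z)|}{1-|b|^{2}}.
\]
Lemma~\ref{lem:growth} controls the first factor by $\|f\|_{p}$ and Lemma~\ref{lem:lip} controls the second by $\|f\|_{p}\rho(a,b)$, so after taking the supremum over the unit ball one gets $S(z)\lesssim D(z)+R_{\psi}(z)$. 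Conditions (a) and (b) force the right-hand side to tend to $0$ at the boundary and, being continuous there, to be bounded on $\mathbb{D}$; hence $\sup_{z}S(z)<\infty$ and $S(z)\to0$, and Lemma~\ref{lem:bddcomp} delivers compactness (with Theorem~\ref{thm:blochbdd} and Corollary~\ref{cor:bdd} available if one prefers to phrase boundedness separately).

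For necessity I would run this comparison in reverse with explicit test functions, compactness giving $S(z)\to0$ via Lemma~\ref{lem:bddcomp}. Testing against the reproducing-type function $f_{a}$ with $f_{a}^{\prime}(w)=\tfrac{1-|a|^{2}}{(1-\overline{a}w)^{2}}$, so that $f_{a}^{\prime}(a)=\tfrac{1}{1-|a|^{2}}$, and using the elementary estimate $\bigl|\tfrac{(1-|a|^{2})(1-|b|^{2})}{(1-\overline{a}b)^{2}}-1\bigr|\lesssim\rho(a,b)$ to replace $f_{a}^{\prime}(b)$ by $\tfrac{1}{1-|b|^{2}}$, yields $D(z)\lesssim S(z)+R_{\psi}(z)$. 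To bound the $R$-terms I would test against $\widetilde{f}$ with $\widetilde{f}^{\prime}(w)=\tfrac{(1-|a|^{2})\alpha_{b}(w)}{(1-\overline{a}w)^{2}}$: the extra factor $\alpha_{b}$ annihilates the $\psi$-contribution, since $\alpha_{b}(b)=0$, while $\widetilde{f}^{\prime}(a)=\tfrac{\alpha_{b}(a)}{1-|a|^{2}}$ has modulus $\tfrac{\rho(a,b)}{1-|a|^{2}}$, so that $R_{\varphi}(z)\lesssim S(z)$; the symmetric choice gives $R_{\psi}(z)\lesssim S(z)$. Combining, $\max\{R_{\varphi},R_{\psi}\}\lesssim S$ yields (a), and then $D\lesssim S+R_{\psi}\lesssim S$ yields (b).

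The main obstacle is purely the uniform control $\|f_{a}\|_{p},\|\widetilde{f}\|_{p}\lesssim 1$, independent of $a$ and $b$. For $1<p<\infty$ this is a routine application of the Forelli--Rudin type estimates $\int_{\mathbb{D}}\tfrac{(1-|w|^{2})^{p-2}}{|1-\overline{a}w|^{sp}}\,dA(w)\approx(1-|a|^{2})^{2+(p-2)-sp}$ to $f_{a}^{\prime}$ and to $\widetilde{f}^{\prime}$, where the bounded factor $|\alpha_{b}|\le1$ only helps, after normalizing $f(0)=0$. The genuinely delicate case is $p=1$, where the norm is measured through $\int_{\mathbb{D}}|f^{\prime\prime}|\,dA$: here one must differentiate once more and estimate two-point integrals such as $\int_{\mathbb{D}}\tfrac{dA(w)}{|1-\overline{b}w|^{2}|1-\overline{a}w|^{2}}$, which I expect to dispatch by the same integral bounds. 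Everything beyond these norm verifications is bookkeeping with Lemmas~\ref{lem:growth} and \ref{lem:lip}.
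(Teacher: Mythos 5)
Your proposal is correct, and its two halves fare differently against the paper. The sufficiency direction is essentially the paper's own argument: the same cross-term insertion, the same appeal to Lemma~\ref{lem:growth} and Lemma~\ref{lem:lip} to get $S\lesssim D+R_{\psi}$ (in your notation: $S$ the sup over the unit ball, $D$ the quantity in (b), $R_{\varphi},R_{\psi}$ the two quantities in (a)), and Lemma~\ref{lem:bddcomp} to conclude, with Theorem~\ref{thm:blochbdd} and Corollary~\ref{cor:bdd} handling boundedness. The necessity direction, however, is genuinely different and cleaner. The paper tests against the M\"{o}bius functions $\alpha_{\varphi(z_{n})}$ and $\alpha_{\varphi(z_{n})}^{2}$; since $|\alpha_{a}^{\prime}(b)|=(1-\rho(a,b)^{2})/(1-|b|^{2})$, every lower bound it extracts carries a parasitic factor $1-\rho^{2}$ (this is what produces (\ref{eq:001})--(\ref{eq:004})), which degenerates as $\rho\to 1$ and must be removed by a proof by contradiction. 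Your kernels $f_{a}$ and $\widetilde{f}$ are normalized at $a$ rather than being M\"{o}bius derivatives, so that factor never appears: $\widetilde{f}$ annihilates the $\psi$-term and gives $R_{\varphi}\lesssim S$ outright, and your elementary estimate $\bigl|(1-|a|^{2})(1-|b|^{2})/(1-\overline{a}b)^{2}-1\bigr|\lesssim\rho(a,b)$ is indeed true (the quotient has modulus $1-\rho^{2}$ and argument $-2\arg(1-\overline{a}b)$, with $|\arg(1-\overline{a}b)|\le\tfrac{\pi}{2}\rho(a,b)$ because $|\mathrm{Im}(\overline{a}b)|=|\mathrm{Im}(\overline{a}(b-a))|\le|a-b|$), which gives $D\lesssim S+R_{\psi}$; hence $\max\{R_{\varphi},R_{\psi}\}+D\lesssim S$ with no contradiction step. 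What the paper's choice buys is free norm control --- $\alpha_{a}$ and $\alpha_{a}^{2}$ are uniformly bounded in $B^{1}$ trivially, by the atomic decomposition --- whereas your route hinges on the norm verification you flag. That flagged point does close: for $1<p<\infty$ Forelli--Rudin gives $\|f_{a}\|_{p}\lesssim 1$ and $|\widetilde{f}^{\prime}|\le|f_{a}^{\prime}|$ disposes of $\widetilde{f}$; for $p=1$ the only term not directly of Forelli--Rudin type is $(1-|a|^{2})(1-|b|^{2})\int_{\mathbb{D}}|1-\overline{b}w|^{-2}|1-\overline{a}w|^{-2}\,dA(w)$, which Cauchy--Schwarz together with $\int_{\mathbb{D}}|1-\overline{a}w|^{-4}\,dA(w)\approx(1-|a|^{2})^{-2}$ bounds by an absolute constant. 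With that computation written out, your proof is complete, and its necessity half is shorter and quantitatively sharper than the paper's.
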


\begin{proof}
Now we assume that conditions (a) and (b) are true. 
Let $K =\{ f \in B^{p} \,:\, \|f\|_{p} \le 1\}$ closed unit ball in $B^{p}$. 
In order to prove the compactness of $C_{\varphi} - C_{\psi} \,:\, B^{p} \to \mathcal{B}_{\nu ,0}$, 
by Lemma \ref{lem:bddcomp}, we may prove that $C_{\varphi} - C_{\psi} \,:\, B^{p} \to \mathcal{B}_{\nu ,0}$ 
is bounded and 
\begin{equation}\label{eq:bddcomp}
\lim_{|z| \to 1}\sup_{f \in K}\nu(z)|((C_{\varphi}-C_{\psi})f)^{\prime}(z)| = 0. 
\end{equation}
By Theorem \ref{thm:blochbdd} we see that (a) and (b) imply the boundedness of 
$C_{\varphi} - C_{\psi} \,:\, B^{p} \to \mathcal{B}_{\nu}$. 
Thus we will claim that $\varphi$ and $\psi$ satisfy the condition (c) in Corollary \ref{cor:bdd}. 
Since $|\varphi(z) - \psi(z)| \le 2 \rho(\varphi(z), \psi(z))$ for $z \in \mathbb{D}$, 
we see that 
\[
\nu(z) |\varphi(z) - \psi(z)| |{\varphi}^{\prime}(z)| 
\le 2 \dfrac{\nu(z)|{\varphi}^{\prime}(z)|}{1-|\varphi(z)|^{2}}\rho(\varphi(z), \psi(z)),
\]
and 
\[
\nu(z) |\varphi(z) - \psi(z)| |{\psi}^{\prime}(z)| 
\le 2 \dfrac{\nu(z)|{\psi}^{\prime}(z)|}{1-|\psi(z)|^{2}}\rho(\varphi(z), \psi(z)),
\]
and so the condition (a) shows 
\[
\lim_{|z| \to 1}\nu(z)|\varphi(z) - \psi(z)|\max\{|{\varphi}^{\prime}(z)|, |{\psi}^{\prime}(z)|\} = 0. 
\]
Moreover we also obtain that 
\begin{align*}
& \nu(z)|{\varphi}^{\prime}(z) - {\psi}^{\prime}(z)|
\\
& = 
\nu(z)\left|
\frac{(1-|\varphi(z)|^{2}){\varphi}^{\prime}(z)}{1-|\varphi(z)|^{2}}
-
\frac{(1-|\psi(z)|^{2}){\psi}^{\prime}(z)}{1-|\psi(z)|^{2}}
\right|
\\
& =
\left|
\dfrac{\nu(z){\varphi}^{\prime}(z)}{1-|\varphi(z)|^{2}} - \dfrac{\nu(z){\psi}^{\prime}(z)}{1-|\psi(z)|^{2}}
\right|
\\
&
\qquad + \nu(z)
\left|
-\dfrac{|\varphi(z)|^{2}{\varphi}^{\prime}(z)}{1-|\varphi(z)|^{2}}
+\dfrac{|\psi(z)|^{2}{\varphi}^{\prime}(z)}{1-|\varphi(z)|^{2}}
-\dfrac{|\psi(z)|^{2}{\varphi}^{\prime}(z)}{1-|\varphi(z)|^{2}}
+\dfrac{|\psi(z)|^{2}{\psi}^{\prime}(z)}{1-|\psi(z)|^{2}}
\right|
\\
& \le 
2 \left|
\dfrac{\nu(z){\varphi}^{\prime}(z)}{1-|\varphi(z)|^{2}} - \dfrac{\nu(z){\psi}^{\prime}(z)}{1-|\psi(z)|^{2}}
\right|
+ 
\left||\varphi(z)|^{2} - |\psi(z)|^{2}\right|
\dfrac{\nu(z)|{\varphi}^{\prime}(z)|}{1-|\varphi(z)|^{2}}
\\
& \le 
2 \left|
\dfrac{\nu(z){\varphi}^{\prime}(z)}{1-|\varphi(z)|^{2}} - \dfrac{\nu(z){\psi}^{\prime}(z)}{1-|\psi(z)|^{2}}
\right|
+ 
4\rho(\varphi(z), \psi(z))
\dfrac{\nu(z)|{\varphi}^{\prime}(z)|}{1-|\varphi(z)|^{2}}.
\end{align*}
Hence (a) and (b) show that $\nu(z)|{\varphi}^{\prime}(z) - {\psi}^{\prime}(z)| \to 0$ as $|z| \to 1^{-}$, 
that is $\varphi - \psi \in \mathcal{B}_{\nu , 0}$. 
By Corollary \ref{cor:bdd}, we see that 
$C_{\varphi} - C_{\psi} \,:\, B^{p} \to \mathcal{B}_{\nu ,0}$ is bounded. 

Next we will prove the equation (\ref{eq:bddcomp}). 
Fix $z \in \mathbb{D}$ and $f \in K$. 
Thus we have 
\begin{align*}
& \nu(z)|((C_{\varphi}- C_{\psi})f)^{\prime}(z)| \\
& =
\nu(z) |f^{\prime}(\varphi(z)){\varphi}^{\prime}(z) - f^{\prime}(\psi(z)){\psi}^{\prime}(z)|
\\
& =
\nu(z)
\left|
\dfrac{{\varphi}^{\prime}(z)}{1-|\varphi(z)|^{2}}(1-|\varphi(z)|^{2})f^{\prime}(\varphi(z)) 
- \dfrac{{\psi}^{\prime}(z)}{1-|\psi(z)|^{2}}(1-|\psi(z)|^{2})f^{\prime}(\psi(z))
\right|
\\
& \le 
\left|
\dfrac{\nu(z){\varphi}^{\prime}(z)}{1-|\varphi(z)|^{2}}
-
\dfrac{\nu(z){\psi}^{\prime}(z)}{1-|\psi(z)|^{2}}
\right|
(1-|\varphi(z)|^{2})|f^{\prime}(\varphi(z))|
\\
& \qquad + 
\left|
(1-|\varphi(z)|^{2})f^{\prime}(\varphi(z)) - (1-|\psi(z)|^{2})f^{\prime}(\psi(z))
\right|
\dfrac{\nu(z)|{\psi}^{\prime}(z)|}{1-|\psi(z)|^{2}}.
\end{align*}
Combining this with Lemma \ref{lem:growth} and \ref{lem:lip}, we obtain 
\[
\nu(z)|((C_{\varphi}- C_{\psi})f)^{\prime}(z)| 
\lesssim 
\left|
\dfrac{\nu(z){\varphi}^{\prime}(z)}{1-|\varphi(z)|^{2}}
-
\dfrac{\nu(z){\psi}^{\prime}(z)}{1-|\psi(z)|^{2}}
\right|
+ 
\dfrac{\nu(z)|{\psi}^{\prime}(z)|}{1-|\psi(z)|^{2}}\rho(\varphi(z), \psi(z))
\]
for any $z \in \mathbb{D}$ and $f \in K$.
Conditions (a) and (b) imply (\ref{eq:bddcomp}). 
By Lemma \ref{lem:bddcomp} we see that 
$(C_{\varphi} - C_{\psi})(K)$ is a compact subset in $\mathcal{B}_{\nu , 0}$. 
Hence $C_{\varphi} - C_{\psi} \, : \, B^{p} \to \mathcal{B}_{\nu , 0}$ is compact. 

To prove that the compactness of $C_{\varphi} - C_{\psi}$ gives conditions (a) and (b), 
we take a sequence $\{z_{n}\}$ of $\mathbb{D}$ with $|z_{n}| \to 1^{-}$ as $n \to \infty$ arbitrary. 
Put 
\[
f_{n}(z) = \dfrac{\varphi(z_{n})-z}{1-\overline{\varphi(z_{n})}z}, 
\quad \text{and}\quad 
g_{n}(z) = \left(\dfrac{\varphi(z_{n})-z}{1-\overline{\varphi(z_{n})}z}\right)^{2} 
\]
for $n \ge 1$ and $z \in \mathbb{D}$. 
Then $\{f_{n}, g_{n}\} \subset B^{p}$ and we can choose a positive constant $C$ 
which is independent of $n$, $\varphi$ and $\psi$ such that 
$\|f_{n}\|_{p} \le C$ and $\|g_{n}\|_{p} \le C$. 
Let $K_{C} = \{f \in B^{p} \,:\, \|f\|_{p} \le C\}$. 
By Lemma \ref{lem:bddcomp}, the compactness of $C_{\varphi} - C_{\psi}$ implies 
\begin{equation}\label{eq:comp}
\lim_{n \to 0}\sup_{f \in K_{C}}\nu(z_{n})|((C_{\varphi} - C_{\psi})f)^{\prime}(z_{n})| = 0.
\end{equation}
By the definition of $f_{n}$, we have 
\begin{align}\label{eq:estimate}
& \nu(z_{n})|((C_{\varphi}-C_{\psi})f_{n})^{\prime}(z_{n})| \notag \\
& = \nu(z_{n})
\left|
\dfrac{-{\varphi}^{\prime}(z_{n})}{1-|\varphi(z_{n})|^{2}} 
+ 
\dfrac{{\psi}^{\prime}(z_{n})(1-|\varphi(z_{n})|^{2})}{(1-\overline{\varphi(z_{n})}\psi(z_{n}))^{2}}
\right| 
\notag \\
& \ge 
\left|
\dfrac{\nu(z_{n})|{\varphi}^{\prime}(z_{n})|}{1-|\varphi(z_{n})|^{2}}
-
\dfrac{\nu(z_{n})|{\psi}^{\prime}(z_{n})|}{1-|{\psi}(z_{n})|^{2}}(1-\rho(\varphi(z_{n}),\psi(z_{n})))^{2})
\right|.
\end{align}
Hence (\ref{eq:comp}) gives 
\begin{equation}\label{eq:001}
\lim_{n \to \infty}
\left|
\dfrac{\nu(z_{n})|{\varphi}^{\prime}(z_{n})|}{1-|\varphi(z_{n})|^{2}}
-
\dfrac{\nu(z_{n})|{\psi}^{\prime}(z_{n})|}{1-|{\psi}(z_{n})|^{2}}(1-\rho(\varphi(z_{n}),\psi(z_{n})))^{2})
\right|= 0.
\end{equation}
Since $g_{n}^{\prime}(\varphi(z_{n})) = 0$, we have 
\begin{align*}
& \nu(z_{n})|((C_{\varphi}-C_{\psi})g_{n})^{\prime}(z_{n})|
\\
& = 
2\nu(z_{n}) 
\dfrac{|\varphi(z_{n}) - \psi(z_{n})||{\psi}^{\prime}(z_{n})|}{|1-\overline{\varphi(z_{n})}\psi(z_{n})|^{3}}(1-|\varphi(z_{n})|^{2})
\\
& = 
2 \dfrac{\nu(z_{n})|{\psi}^{\prime}(z_{n})|}{1-|\psi(z_{n})|^{2}}\rho(\varphi(z_{n}), \psi(z_{n}))(1-\rho(\varphi(z_{n}), \psi(z_{n}))^{2}). 
\end{align*}
The equation (\ref{eq:comp}) also gives 
\begin{equation}\label{eq:002}
\lim_{n \to \infty}
\dfrac{\nu(z_{n})|{\psi}^{\prime}(z_{n})|}{1-|\psi(z_{n})|^{2}}\rho(\varphi(z_{n}), \psi(z_{n}))(1-\rho(\varphi(z_{n}), \psi(z_{n}))^{2})
=0.
\end{equation}
By replacing the role of $\varphi$ and $\psi$ in definitions $f_{n}$ and $g_{n}$, 
we also see that $\varphi$ and $\psi$ satisfy 
\begin{equation}\label{eq:003}
\lim_{n \to \infty}
\left|
\dfrac{\nu(z_{n})|{\psi}^{\prime}(z_{n})|}{1-|\psi(z_{n})|^{2}}
-
\dfrac{\nu(z_{n})|{\varphi}^{\prime}(z_{n})|}{1-|{\varphi}(z_{n})|^{2}}(1-\rho(\varphi(z_{n}),\psi(z_{n})))^{2})
\right|= 0, 
\end{equation}
and
\begin{equation}\label{eq:004}
\lim_{n \to \infty}
\dfrac{\nu(z_{n})|{\varphi}^{\prime}(z_{n})|}{1-|\varphi(z_{n})|^{2}}\rho(\varphi(z_{n}), \psi(z_{n}))(1-\rho(\varphi(z_{n}), \psi(z_{n}))^{2})
=0.
\end{equation}
Now we assume that 
\[
\lim_{n \to \infty}\dfrac{\nu(z_{n})|{\varphi}^{\prime}(z_{n})|}{1-|\varphi(z_{n})|^{2}} \rho(\varphi(z_{n}), \psi(z_{n})) \neq 0. 
\]
Then (\ref{eq:004}) indicates that 
$1-\rho(\varphi(z_{n}),\psi(z_{n}))^{2} \to 0$ as $n \to \infty$. 
By (\ref{eq:001}) we obtain 
\[
\lim_{n \to \infty}\dfrac{\nu(z_{n})|{\varphi}^{\prime}(z_{n})|}{1-|\varphi(z_{n})|^{2}} = 0.
\]
Since $\rho(\varphi(z_{n}), \psi(z_{n})) < 1$, this claim contradicts our assumption. 
By the same argument with (\ref{eq:002}) and (\ref{eq:003}), 
we also have that 
\begin{equation}\label{eq:lim(a)}
\lim_{n \to \infty}\dfrac{\nu(z_{n})|{\psi}^{\prime}(z_{n})|}{1-|\psi(z_{n})|^{2}} \rho(\varphi(z_{n}), \psi(z_{n})) = 0.
\end{equation}
Since $\{z_{n}\} \subset \mathbb{D}$ with $|z_{n}| \to 1$ as $n \to \infty$ was arbitrary, 
these imply the condition (a) holds. 
Furthemore, the estimate (\ref{eq:estimate}) gives 
\begin{align*}
& \nu(z_{n})|((C_{\varphi}-C_{\psi})f_{n})^{\prime}(z_{n})| 
\\
& \ge 
\left|
\dfrac{\nu(z_{n})|{\varphi}^{\prime}(z_{n})|}{1-|\varphi(z_{n})|^{2}}
-
\dfrac{\nu(z_{n})|{\psi}^{\prime}(z_{n})|}{1-|{\psi}(z_{n})|^{2}}
\right|
-
\dfrac{\nu(z_{n})|{\psi}^{\prime}(z_{n})|}{1-|{\psi}(z_{n})|^{2}}
\rho(\varphi(z_{n}),\psi(z_{n})). 
\end{align*}
By (\ref{eq:comp}) and (\ref{eq:lim(a)}), we obtain 
\[
\lim_{n \to \infty}
\left|
\dfrac{\nu(z_{n})|{\varphi}^{\prime}(z_{n})|}{1-|\varphi(z_{n})|^{2}}
-
\dfrac{\nu(z_{n})|{\psi}^{\prime}(z_{n})|}{1-|{\psi}(z_{n})|^{2}}
\right| =0, 
\]
and so this indicates the condition (b). 
\end{proof}

\begin{ack}
This research is partly supported by JSPS KAKENHI Grants-in-Aid 
for Scientific Research (C), Grant Number 17K05282. 
Partial work on this paper was done while the second author
visited the Department of Mathematics, Central University of Jammu, Jammu.  
He wishes to thank Central University of Jammu for hosting his visit.
The first author is thankful to NBHM(DAE)(India) for the project grant No. 02011/30/2017/R\&{D} II/12565.
\end{ack}

\end{document}